\newtheorem{theorem}{Theorem}[section]
\newtheorem{proposition}[theorem]{Proposition}
\newtheorem{corollary}[theorem]{Corollary}
\theoremstyle{definition}
\newtheorem{conjecture}[theorem]{Conjecture}
\theoremstyle{remark}
\numberwithin{equation}{section}
\newcommand{\ce}{\:\colonequals\:}
\newcommand{\floor}[1]{\left \lfloor #1 \right \rfloor}
\newcommand{\Z}{\mathbb{Z}}
\newcommand{\R}{\mathbb{R}}
\newcommand {\EE}  {{\mathcal E}}
\title[A number theoretic problem on polynomials with bounded roots]{A number theoretic problem on the distribution of polynomials with bounded roots}
\author [P.~Kirschenhofer and M.~Weitzer]{Peter~Kirschenhofer and Mario~Weitzer}
\address{Chair of Mathematics and Statistics,
Montanuniversitaet Leoben, Franz Josef-Stra\ss{}e 18, A-8700 Leoben,
AUSTRIA} \email{Peter.Kirschenhofer@unileoben.ac.at,
Mario.Weitzer@unileoben.ac.at}
\thanks{Both authors are supported by the Austrian Science Fund (FWF) Doctoral Program  W1230 ``Discrete Mathematics'', the first author is also supported by the Franco-Austrian research project I1136 granted by the French National Research Agency (ANR) and the FWF. The second author would like to express his gratitude to Prof.~Attila Peth\H{o} and the Departments of Computer Science and Mathematics at the University of Debrecen for their warm hospitality during his stay there in Fall 2013, when his attention was drawn to this problem.}
\date{\today}
\keywords{Polynomials with bounded roots, Legendre polynomials,
shift radix systems}
\subjclass[2000]{Primary: 11B65, Secondary: 33B20}
\begin{document}
\begin{abstract}
Let $\EE_d^{(s)}$ denote the set of coefficient vectors
$(a_1,\dots,a_d)\in  \R^d$ of contractive polynomials
$x^d+a_1x^{d-1}+\dots+a_d\in \R[x]$ that have exactly $s$ pairs of complex conjugate
roots and let $v_d^{(s)}=\lambda_d(\EE_d^{(s)})$ be its ($d$-dimensional) Lebesgue
measure. We settle the instance $s=1$ of a conjecture by Akiyama and
Peth\H{o}, stating that the ratio $v_d^{(s)}/v_d^{(0)}$ is an integer
for all $d\ge 2s.$ Moreover we establish the surprisingly simple
formula $v_d^{(1)}/v_d^{(0)} = (P_d(3)-2d-1)/4,$ where $P_d(x)$ are
the Legendre polynomials.
\end{abstract}
\maketitle \setcounter{section}{0}

\section{Introduction}

Let $\EE_d$ denote the set of all coefficient vectors
$(a_1,\dots,a_d)\in  \R^d$ of polynomials
$x^d+a_1x^{d-1}+\dots+a_d$ with coefficients in $\R$ and all
roots having absolute value less than 1, and let
$\EE_d^{(s)}$ denote the subset of the coefficient vectors of those
polynomials in $\EE_d$ that have exactly $s$ pairs of complex
conjugate roots. Let furthermore $v_d=\lambda_d (\EE_d)$ and
$v_d^{(s)}=\lambda_d(\EE_d^{(s)})$ denote the $d$-dimensional Lebesgue measures of the
referring sets.

The sets $\EE_d$ have been studied by several authors in different context, compare e.g. Schur~ \cite{Schur:18},
Fam and Meditch \cite{Fam-Meditch:78} or Fam \cite{Fam:89}. More recently, the regions $\EE_d$  have become of interest in the study of ``shift radix systems'', since the regions where those systems have a certain periodicity property are in close connection with the regions  $\EE_d$  (compare e.g. Kirschenhofer et al. \cite{KPST}) . Fam \cite{Fam:89} established the formula
\begin{equation}
\begin{split}
v_d =
\begin{cases}
2^{2m^2}\prod_{j=1}^m \frac {(j-1)!^4}{(2j-1)!^2} \qquad &\text {if } d=2m,
\\
2^{2m^2+2m+1}\prod_{j=1}^m \frac {j!^2(j-1)!^2}{(2j-1)!(2j+1)!}
\qquad &\text{if } d=2m+1.
\end{cases}\\
\end{split}
\end{equation}
 In \cite{API} Akiyama and
Peth\H{o} gave a number of results on the quantities $v_d^{(s)},$
including an
integral representation for general $s$ from which they derived an explicit formula in the instance $s=0$ as well as a somewhat involved expression for  $s=1$ reading
\begin{equation}\label{vd}
\begin{split}
v_d^{(0)} =& \frac{2^{d(d + 1)/2}}{d!}S_d(1,1,1/2),\\
v_d^{(1)} =& 2^{(d - 1)(d - 2)/2-2}
\sum_{j=0}^{d-2}\sum_{k=0}^{d-2-j}
\frac{(-1)^{d-k}2^{2d-2-2k-j}}{j!k!(d-2-j-k)!}
B_{d-2}(d-2-k,d-2-k-j)\\
&\int_{z=0}^{1}\int_{y=-2\sqrt{z}}^{2\sqrt{z}} y^j(y+z+1)^k\:dy\:dz
\end{split}
\end{equation}
for $d \geq 2$ and $0 \leq k \leq j \leq d$ where
\begin{equation}
\begin{split}
S_d(1,1,1/2) \ce& \frac{1}{\prod_{i=0}^{d-1} \binom{2i+1}{i}}\\
\end{split}
\end{equation}
is a special instance of the Selberg integral $S_n(\alpha,\beta,\gamma)$ and where
\begin{equation}
\begin{split}
B_d(j,k) \ce& \prod_{i=1}^{k}\frac{2 + (d - i - 1)/2}{3 + (2d - i -
1)/2} \frac{ \prod_{i=1}^{j}(1 + (d - i)/2) \prod_{i=1}^{k}(1 + (d -
i)/2) }{ \prod_{i=1}^{j+k}(2 + (2d - i - 1)/2) }
S_d(1,1,1/2).\\
\end{split}
\end{equation}
is a special instance of Aomoto's generalization of the Selberg integral (compare Andrews at al. \cite[Section 8]{AndrewsAskeyRoy:99} for Selberg's and Aomoto's integrals).

Furthermore, Akiyama and
Peth\H{o} in \cite{API} proved that the ratios $v_d^{(s)}/v_d^{(0)}$ are rational, and, motivated by extensive numerical evidence,   stated the following
\begin{conjecture} \label{conjecture} \cite[Conjecture 5.1]{API}
The quotient
$$v_d^{(s)}/v_d^{(0)}$$
is an integer for all non-negative integers $d,s$ with $d\ge 2s.$
\end{conjecture}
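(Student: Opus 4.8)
The plan is to compute $v_d^{(s)}$ for every admissible $s$ by passing from coefficient space to root space, where the defining region factorizes, and then to reduce the integrality of $v_d^{(s)}/v_d^{(0)}$ to an arithmetic statement about the resulting special integrals. A contractive polynomial with exactly $s$ conjugate pairs is determined by $d-2s$ real roots $t_1,\dots,t_{d-2s}\in(-1,1)$ together with $s$ roots $z_1,\dots,z_s$ in the open upper half of the unit disk (their conjugates being forced). Since the map from the unordered multiset of roots to the coefficient vector is, up to sign, the elementary symmetric map, whose holomorphic Jacobian is the Vandermonde $\prod_{i<j}(\xi_j-\xi_i)$, and accounting for the $((d-2s)!\,s!)$-fold overcounting from orderings, I would write
\begin{equation*}
v_d^{(s)}=\frac{2^{s}}{(d-2s)!\,s!}\int\Bigl|\prod_{i<j}(\xi_j-\xi_i)\Bigr|\;dt\,dx\,dy,
\end{equation*}
where $\xi$ runs over all $d$ roots $t_1,\dots,t_{d-2s},z_1,\bar z_1,\dots,z_s,\bar z_s$, the prefactor $2^{s}$ comes from the real coordinate change on each conjugate pair, the factors $z_k-\bar z_k=2\I\operatorname{Im}z_k$ are absorbed into $\abs{\prod_{i<j}(\xi_j-\xi_i)}$, and the integration runs over the region just described. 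Specialising to $s=0$ recovers $v_d^{(0)}=\tfrac{1}{d!}2^{d(d+1)/2}S_d(1,1,1/2)$, in agreement with \eqref{vd}, which is a reassuring consistency check on the constants.

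I would then factor the Vandermonde into its real--real, real--complex, and complex--complex blocks and integrate out the imaginary directions. The real--real block is the Vandermonde in the $t_j$; after the change of variables $p_k=-2\operatorname{Re}z_k$, $q_k=\abs{z_k}^2$ (so that each pair corresponds to a quadratic factor $x^2+p_kx+q_k$ with complex roots in the unit disk, i.e. $0<q_k<1$ and $p_k^2<4q_k$), the real--complex blocks $\prod_{j,k}\abs{z_k-t_j}^2$ and the complex--complex blocks turn into polynomial weights in the $t_j,p_k,q_k$. This is exactly the mechanism producing the Selberg integral $S_d(1,1,1/2)$ for $v_d^{(0)}$ and the Aomoto-type expression \eqref{vd} for $v_d^{(1)}$. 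I expect that for general $s$ the reduction yields a finite $\Q$-linear combination of the special integrals $S_d(1,1,1/2)$ and $B_d(j,k)$, so that the rationality of the ratio (already proved in \cite{API}) reappears transparently, with all denominators bounded by the factorials and binomial coefficients entering those integrals.

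The decisive step, and the one I expect to be the main obstacle, is the passage from rationality to integrality for general $s$. For $s=1$ the reduction collapses to a single Legendre value and yields the closed form $v_d^{(1)}/v_d^{(0)}=(P_d(3)-2d-1)/4$, after which integrality follows from a congruence analysis of $P_d(3)$ modulo $4$; this is the route I carry out in detail below. For $s\ge 2$ no analogous closed form is known, and even granting one, showing that the rational ratio has no denominator requires controlling the $p$-adic valuation of a combination of Selberg/Aomoto values simultaneously for every prime $p$---a substantially harder task, since the individual summands are far from integral and the integrality must emerge from cancellation. My concrete proposal for attacking the general case is therefore two-pronged: first, treat $s=2$ explicitly to uncover the analogue of the Legendre formula, conjecturally a closed expression in Legendre (or Jacobi) polynomials evaluated at $3$; second, seek a combinatorial interpretation realising $v_d^{(s)}/v_d^{(0)}$ as a genuine integer count, since a uniform valuation argument across all primes appears out of reach without such additional structure. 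The case $s=1$ settled below is the first---and, at present, the only complete---instance of this programme.
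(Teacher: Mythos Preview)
The statement you address is a conjecture that the paper does \emph{not} prove in full; the paper settles only the instance $s=1$ (Theorem~\ref{mainresult}) and explicitly leaves $s\ge 2$ open, remarking in the concluding section that even $s=2$ ``seems to be very hard.'' Your proposal is candid about this: what you present is a research programme rather than a proof, and you correctly isolate the passage from rationality to integrality for general $s$ as the decisive obstacle. One point where your expectation may be too optimistic: you write that for general $s$ the reduction should give a $\Q$-linear combination of $S_d(1,1,1/2)$ and the Aomoto values $B_d(j,k)$, but the paper's own concluding remarks indicate that already for $s=2$ a genuine generalisation of the Selberg--Aomoto integral beyond the class $B_d(j,k)$ would be needed, so the existing evaluations may not suffice even to write down the ratio in closed form.

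For $s=1$ your outlined route---derive the closed form $(P_d(3)-2d-1)/4$ and then argue integrality---matches the paper's approach in spirit, with one procedural difference worth noting: the paper does not deduce integrality from a mod-$4$ congruence on $P_d(3)$, but first obtains the manifestly integral binomial-sum representation
\[
\frac{v_d^{(1)}}{v_d^{(0)}}=\sum_{a=2}^{d}(-1)^{d+a}\,2^{a-2}\binom{d+a}{2a}\Bigl(\binom{2a}{a}-2^a\Bigr)
\]
via a lengthy chain of hypergeometric manipulations (Vandermonde, a known sum from \cite{Knuth:94}, Pfaff's reflection law) starting from the Aomoto-type formula~\eqref{vd}, and only afterwards identifies this sum with the Legendre expression. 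More importantly, you write that this is ``the route I carry out in detail below'' and then the proposal ends: the actual $s=1$ computation is absent. As submitted, therefore, the proposal contains no completed proof of any instance of the conjecture; it is an accurate high-level map of the terrain, consonant with the paper's own assessment, but not a proof.
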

In Section 2 of this paper we will prove this conjecture for
the instance $s=1$ and in addition give a surprisingly simple
explicit formula for the quotient in this case involving the
Legendre polynomials evaluated at $x=3$. In the proof we will
combine several transformations of binomial sums,
one of them corresponding to a special instance of Pfaff's
reflection law for hypergeometric functions. We refer the reader in
particular to the standard reference \cite[Section 5]{Knuth:94} for
the techniques that we will apply.

In Section 3 we will use our main theorem to establish a
linear recurrence for the sequence $\left(v_d^{(1)}/{v_d^{(0)}}\right)_{d\ge 0},$
and from its generating function will derive its asymptotic behaviour for
$d \to \infty.$ Combined with a result from \cite{API}, this also gives information
on the asymptotic behaviour of the probability $p_d^{(1)}= {v_d^{(1)}}/{v_d}$ of a
contractive polynomial of degree $d$ to have exactly one pair of complex conjugate roots.

In the final section we discuss possible generalizations of our results.

\section{Main result}
\begin{theorem} \label{mainresult} The quotient ${v_d^{(1)}}/{v_d^{(0)}}$  is an
integer for each $d\geq2$. Furthermore we have
\begin{equation*}
\begin{split}
\frac{v_d^{(1)}}{v_d^{(0)}} &= \frac{P_d(3)-2d-1}{4}, \qquad \text{where}\\
P_d(x) &\ce 2^{-d}\sum_{k=0}^{\floor {d/2}}(-1)^k \binom{d-k}{k} \binom{2d-2k}{d-k}
x^{d-2k}\\ &= \sum_{k=0}^d \binom{d+k}{2k} \binom{2k}{k}
\left(\frac{x-1}{2}\right)^k
\end{split}
\end{equation*}
are the Legendre
polynomials (cf. \cite[p.~66]{Riordan:68}).
\end{theorem}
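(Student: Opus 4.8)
Starting point is the explicit double-integral expression for $v_d^{(1)}$ in~\eqref{vd}. First I would carry out the inner integrations over $y$ and $z$ explicitly: the $y$-integral of $y^j(y+z+1)^k$ over the symmetric interval $[-2\sqrt z,2\sqrt z]$ is a polynomial in $\sqrt z$ (only even powers survive after pairing terms), so the subsequent $z$-integration over $[0,1]$ is elementary and yields, for each pair $(j,k)$, a closed-form rational expression; substituting back leaves $v_d^{(1)}$ as a finite double sum of hypergeometric type. Next I would divide by $v_d^{(0)}=\tfrac{2^{d(d+1)/2}}{d!}S_d(1,1,1/2)$ and use the product formula for $B_{d-2}$ together with the explicit value $S_d(1,1,1/2)=1/\prod_{i=0}^{d-1}\binom{2i+1}{i}$ to cancel the Selberg factors; what should remain is a purely combinatorial double sum in $j,k$ with binomial-coefficient and factorial weights, and no transcendental pieces.

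The heart of the argument is then the evaluation of that double sum. I would interchange the order of summation and perform the inner sum in closed form using Gosper/Zeilberger-style summation or a Vandermonde/Pfaff–Saalschütz identity; the hint in the introduction that ``one of [the transformations corresponds] to a special instance of Pfaff's reflection law'' tells me to expect a ${}_2F_1$ at argument that gets reflected (typically $z\mapsto z/(z-1)$ or $1-z$), converting a non-summable series into a summable one. After the inner sum collapses, the outer sum should telescope or match a known generating-function coefficient. To identify the answer with $(P_d(3)-2d-1)/4$ I would use the second (Gegenbauer-type) representation of $P_d$ given in the statement, namely $P_d(x)=\sum_{k}\binom{d+k}{2k}\binom{2k}{k}\bigl(\tfrac{x-1}{2}\bigr)^k$, which at $x=3$ gives $P_d(3)=\sum_k\binom{d+k}{2k}\binom{2k}{k}$; the subtracted term $2d+1=\binom{d}{0}\binom{0}{0}+\binom{d+1}{2}\cdot 2$ accounts precisely for the $k=0$ and $k=1$ contributions, so the target is $\tfrac14\sum_{k\ge 2}\binom{d+k}{2k}\binom{2k}{k}$, and I would aim to bring the reduced double sum into exactly this shape.

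Integrality then follows almost for free once the formula is established: $P_d(3)$ is an integer (clear from either binomial representation), and one checks $P_d(3)\equiv 2d+1\pmod 4$ by a short induction using the three-term recurrence $(d+1)P_{d+1}(x)=(2d+1)xP_d(x)-dP_{d-1}(x)$ evaluated at $x=3$, or more transparently from $\tfrac14\sum_{k\ge 2}\binom{d+k}{2k}\binom{2k}{k}$ by showing each summand, suitably grouped, contributes an integer; indeed $\binom{2k}{k}$ is even for $k\ge 1$ and a parity/$2$-adic count of $\binom{d+k}{2k}\binom{2k}{k}$ over $k\ge 2$ gives divisibility by $4$.

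**Main obstacle.** The genuine difficulty is the middle step: recognizing which hypergeometric transformation linearizes the double sum and massaging the factorial weights $1/(j!\,k!\,(d-2-j-k)!)$ together with the $B_{d-2}$ product into a balanced (Saalschützian) series to which Pfaff's reflection applies cleanly. Getting the bookkeeping of powers of $2$ and the $(-1)^{d-k}$ signs to line up so that the spurious terms cancel and only the clean sum $\sum_{k\ge 2}\binom{d+k}{2k}\binom{2k}{k}$ survives is where the real work — and the risk of sign or index errors — lies; everything before (the integrations) and after (integrality) is routine by comparison.
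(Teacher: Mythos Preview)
Your overall strategy is exactly the paper's: evaluate the $(y,z)$ integral, cancel the Selberg factors against $v_d^{(0)}$, collapse the resulting binomial sums with Vandermonde-type identities and one application of Pfaff's reflection, and then read off the Legendre expression. So the architecture is right.

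There is, however, a concrete error in your identification step. You claim that $2d+1$ equals the $k=0$ and $k=1$ contributions to $P_d(3)=\sum_{k}\binom{d+k}{2k}\binom{2k}{k}$, i.e.\ that $2d+1=\binom{d}{0}\binom{0}{0}+2\binom{d+1}{2}$. But the right-hand side is $1+d(d+1)=d^2+d+1$, which is $\neq 2d+1$ for every $d\ge 2$ (e.g.\ $d=2$ gives $7$ versus $5$). Consequently your announced target $\tfrac14\sum_{k\ge 2}\binom{d+k}{2k}\binom{2k}{k}$ is \emph{not} equal to $v_d^{(1)}/v_d^{(0)}$, and the integrality argument you build on it (``each summand, suitably grouped, contributes an integer'') is aimed at the wrong expression. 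What actually happens in the computation is that the reduced sum does not match the pure Legendre sum with two terms removed; rather it emerges as
\[
\frac{v_d^{(1)}}{v_d^{(0)}}=\sum_{a=2}^{d}(-1)^{d+a}\,2^{a-2}\binom{d+a}{2a}\Bigl(\binom{2a}{a}-2^{a}\Bigr),
\]
a difference of two pieces. The $\binom{2a}{a}$-part gives $(-1)^d P_d(-3)/4=P_d(3)/4$, while the $2^{a}$-part is (up to sign) the associated Legendre polynomial $\rho_d(x)=\sum_k\binom{d+k}{d-k}x^k$ evaluated at $x=-4$, and it is the recursion for $\rho_d$ (not a truncation of the $P_d$ sum) that yields $(-1)^d\rho_d(-4)=2d+1$. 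Integrality then drops out immediately from the displayed formula, since every summand is an integer for $a\ge 2$; no separate $2$-adic analysis of $\binom{2k}{k}$ is needed. So your plan is sound up to the point where the double sum has been collapsed, but you should expect a second, non-Legendre piece to appear and be prepared to identify it with $2d+1$ via the $\rho_d$ recurrence rather than by chopping off low-index terms of the Legendre sum.
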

\begin{proof}

In a first step we solve the double integral in identity (\ref{vd}) for
$v_d^{(1)}$. Let $j\geq0$, $k\geq0$. Then
\begin{equation*}
\begin{split}
\int_{z=0}^{1}\!\int_{y=-2\sqrt{z}}^{2\sqrt{z}} y^j&(y+z+1)^k\:dy\:dz
= \int_{y=-2}^{2}\int_{z={y^2}/{4}}^{1} y^j(y+z+1)^k\:dz\:dy\\
&= \frac{1}{k+1}\Bigg(\int_{-2}^{2}y^j(y+2)^{k+1}\:dy-\int_{-2}^{2}y^j(y/2+1)^{2k+2}\:dy\Bigg)\\
&=\frac{1}{k+1}\Bigg(2^{j+k+2}\int_{-1}^{1}y^j(y+1)^{k+1}\:dy-2^{j+1}\int_{-1}^{1}y^j(y+1)^{2k+2}\:dy\Bigg)\\
\end{split}
\end{equation*}
where we performed the substitution $y/2 \rightarrow y$ in the last
step. By iterated partial integration we gain now from the last
expression that
\begin{equation}\label{partint}
\begin{split}
\int_{z=0}^{1}\!\int_{y=-2\sqrt{z}}^{2\sqrt{z}} y^j(y+z+1)^k\:dy\:dz
&= \frac{2^{j+2k+4}}{k+1}\Bigg(\sum_{r=1}^{j+1}
\frac{(-2)^{r-1}(j)_{r-1}}{(k+r+1)_{r}}-
\sum_{r=1}^{j+1} \frac{(-2)^{r-1}(j)_{r-1}}{(2k+r+2)_{r}}\Bigg)\\
\end{split}
\end{equation}
with $(x)_j \ce \prod_{i=0}^{j-1}(x-i)$.

In the following we insert (\ref{partint}) in formula
(\ref{vd})  and perform stepwise a first evaluation of
${v_d^{(1)}}/{v_d^{(0)}}$ mainly as a sum of products of
factorials.
\begin{equation*}
\begin{split}
\frac{v_d^{(1)}}{v_d^{(0)}}
=& \Bigg(\vphantom{\frac{2^{d(d + 1)/2}}{d!}\frac{1}{\prod_{i=0}^{d-1} \binom{2i+1}{i}}}2^{(d - 1)(d - 2)/2-2}
\sum_{j=0}^{d-2}\sum_{k=0}^{d-2-j}
\frac{(-1)^{d+k}2^{2d-2-2k-j}}{j!k!(d-2-j-k)!}
\prod_{i=1}^{d-2-k-j}\frac{2 + (d-2 - i - 1)/2}{3 + (2(d-2) - i - 1)/2}\\
&\frac{
\prod_{i=1}^{d-2-k}(1 + (d-2 - i)/2)
\prod_{i=1}^{d-2-k-j}(1 + (d-2 - i)/2)
}{
\prod_{i=1}^{d-2-k+d-2-k-j}(2 + (2(d-2) - i - 1)/2)
}
\frac{1}{\prod_{i=0}^{d-2-1} \binom{2i+1}{i}}\\
&
\frac{2^{j+2k+4}}{k+1}\Bigg(\sum_{r=1}^{j+1} \frac{(-2)^{r-1}(j)_{r-1}}{(k+r+1)_{r}}-
\sum_{r=1}^{j+1} \frac{(-2)^{r-1}(j)_{r-1}}{(2k+r+2)_{r}}\Bigg)
\vphantom{\frac{2^{d(d + 1)/2}}{d!}\frac{1}{\prod_{i=0}^{d-1}
\binom{2i+1}{i}}}\Bigg)/\Bigg(\frac{2^{d(d + 1)/2}}{d!}\frac{1}{\prod_{i=0}^{d-1} \binom{2i+1}{i}}\Bigg)\\
=&
\sum_{j=0}^{d-2}\sum_{k=0}^{d-j-2}
(-1)^{d+k+1}\frac{d!}{j!(k+1)!(d-j-k-2)!}
\prod_{i=1}^{d-j-k-2}\frac{d - i + 1}{2 d - i + 1}\\
&
\frac{
\prod_{i=1}^{d-k-2}(d - i)
\prod_{i=1}^{d-j-k-2}(d - i)
}{
\prod_{i=1}^{2d-j-2k-4}(2d - i - 1)
}
\frac{\prod_{i=0}^{d-1} \binom{2i+1}{i}}{\prod_{i=0}^{d-3} \binom{2i+1}{i}}
\Bigg(
\sum_{r=1}^{j+1} \frac{(-2)^{r}(j)_{r-1}}{(k+r+1)_{r}}-
\sum_{r=1}^{j+1} \frac{(-2)^{r}(j)_{r-1}}{(2k+r+2)_{r}}
\Bigg)\\
=& \sum_{j=0}^{d-2}\sum_{k=0}^{d-j-2}
(-1)^{d+k+1}\frac{d!}{j!(k+1)!(d-j-k-2)!} \frac{\frac{d!}{(j + k +
2)!}}{\frac{(2 d)!}{(d + j + k + 2)!}} \frac{ \frac{(d - 1)!}{(k +
1)!} \frac{(d - 1)!}{(j + k + 1)!} }{ \frac{(2 d - 2)!}{(j + 2 k +
2)!}
}\\
& \frac{(2d-3)!}{(d-2)!(d-1)!}\frac{(2d-1)!}{(d-1)!d!} \Bigg(
\sum_{r=1}^{j+1}
\frac{(-2)^{r}\frac{j!}{(j-r+1)!}}{\frac{(k+r+1)!}{(k+1)!}}-
\sum_{r=1}^{j+1}
\frac{(-2)^{r}\frac{j!}{(j-r+1)!}}{\frac{(2k+r+2)!}{(2k+2)!}}
\Bigg)\\
=& \sum_{j=0}^{d-2}\sum_{k=0}^{d-j-2} (-1)^{d+k+1}
\frac{(d + j + k + 2)!(j + 2 k + 2)!}{(d-j-k-2)!j!(j + k + 2)!(j + k + 1)!(k + 1)!(k + 1)!}\\
& \Bigg( \sum_{r=1}^{j+1}
\frac{(-2)^{r-2}j!(k+1)!}{(j-r+1)!(k+r+1)!}- \sum_{r=1}^{j+1}
\frac{(-2)^{r-2}j!(2k+2)!}{(j-r+1)!(2k+r+2)!}
\Bigg).\\
\end{split}
\end{equation*}
In the next step we rewrite the last expression as a sum over
products of binomial coefficients.
\begin{equation*}
\begin{split}
\frac{v_d^{(1)}}{v_d^{(0)}}
 =&
\sum_{j=0}^{d-2}\sum_{k=0}^{d-j-2}
(-1)^{d+k+1}
\binom{d}{j+k+2}
\binom{d+j+k+2}{d}
\frac{j+k+2}{j+2k+3}\\
& \Bigg( \sum_{r=1}^{j+1}
(-2)^{r-2}\binom{j+2k+3}{2k+r+2}\binom{2k+r+2}{k+1}-
\sum_{r=1}^{j+1} (-2)^{r-2}\binom{j+2k+3}{2k+r+2}\binom{2k+2}{k+1}
\Bigg).\\
\end{split}
\end{equation*}
Using the substitution $j+k+2 \rightarrow a, k+1 \rightarrow b$ the
latter expression reads
\begin{equation*}
\begin{split}
 &\sum_{a=2}^{d}\sum_{b=0}^{a-1}\sum_{r=1}^{a-b} (-1)^{d+b}
(-2)^{r-2} \frac{a}{a+b} \binom{d}{a} \binom{d+a}{d}
\binom{a+b}{2b+r} \binom{2b+r}{b}-
\\
& \sum_{a=2}^{d}\sum_{b=0}^{a-1}\sum_{r=1}^{a-b} (-1)^{d+b}
(-2)^{r-2} \frac{a}{a+b} \binom{d}{a} \binom{d+a}{d}
\binom{a+b}{2b+r} \binom{2b}{b}\\
\end{split}
\end{equation*}
so that
\begin{equation}\label{twosumsdifference}
\begin{split}
\frac{v_d^{(1)}}{v_d^{(0)}} =& \sum_{a=2}^{d} (-1)^{d} a
\binom{d}{a} \binom{d+a}{d} \sum_{r=1}^{a} (-2)^{r-2}
\\
&
\Bigg(
\sum_{b=0}^{a-r}
(-1)^{b}
\frac{1}{a+b}
\binom{a+b}{2b+r}
\binom{2b+r}{b}-
\sum_{b=0}^{a-r}
(-1)^{b}
\frac{1}{a+b}
\binom{a+b}{2b+r}
\binom{2b}{b}
\Bigg).\\
\end{split}
\end{equation}
In the following we will simplify the two innermost sums.

We start with the first sum.
If $r=a$ the sum trivially equals $\frac 1a$. Let us assume $1\le
r\le a-1$ now. Then we have
\begin{equation*}
\begin{split}
\sum_{b=0}^{a-r} (-1)^{b} \frac{1}{a+b} \binom{a+b}{2b+r}
\binom{2b+r}{b} =& \frac{1}{a-r} \sum_{b=0}^{a-r} (-1)^{b}
\binom{a-r}{b} \binom{a+b-1}{b+r}\\
=& \frac{(-1)^r}{a-r} \sum_{b=0}^{a-r} \binom{a-r}{b}
\binom{r-a}{b+r} = \frac{(-1)^r}{a-r} \binom{0}{a} = 0,
\end{split}
\end{equation*}
where we used
\begin{equation*}
(-1)^k\binom{k-n-1}{k} =\binom{n}{k} \qquad (n \in \Z, k \ge 0)
\end{equation*}
for the second identity, and Vandermonde's identity
\begin{equation*}
\begin{split}
\sum_{k=0}^n\binom{n}{k}\binom{s}{k+t} =&
\sum_{k=0}^n\binom{n}{k}\binom{s}{n+t-k}\\
=& \binom{n+s}{n+t}
\qquad (s \in \Z, n,t \ge 0)\\
\end{split}
\end{equation*}
for the third one.

Altogether we have established
\begin{equation}\label{firstsum}
\begin{split}
\sum_{b=0}^{a-r} (-1)^{b} \frac{1}{a+b} \binom{a+b}{2b+r}
\binom{2b+r}{b}= \frac{1}{a}\delta_{r,a}  \qquad (1\le r\le a),
\end{split}
\end{equation}
where $\delta_{r,a}$ denotes the Kronecker symbol.

Now we turn to the second sum in question. Since this is a sum reminiscent of a sum treated in \cite[Section 5.2,
Problem~7]{Knuth:94} we first try to adopt the strategy followed
there and use \cite[Section 5.1, identity 5.26]{Knuth:94}
\begin{equation}
\begin{split}
\binom{\l +q+1}{m+n+1}=\sum_{0\le k \le \l} \binom{\l -k}{m}
\binom{q+k}{n}
 \quad (\l,  m \ge 0, n\ge q\ge 0).
\end{split}
\end{equation}
With
$\l = a+b-1, q=0, m=2b, n=r-1$ and $k=s$ we get
\begin{equation*}
\begin{split}
\sum_{b=0}^{a-r} (-1)^{b} \frac{1}{a+b} \binom{a+b}{2b+r}
\binom{2b}{b} =& \sum_{b=0}^{a-r}\sum_{s=0}^{a+b-1}
\frac{(-1)^{b}}{a+b} \binom{a+b-s-1}{2b} \binom{s}{r-1}
\binom{2b}{b}
\end{split}
\end{equation*}
which by a change of summations yields
\begin{equation}\label{changesums}
\begin{split}
=& \sum_{s=r-1}^{2a-r-1} \binom{s}{r-1} \sum_{b=0}^{a-s-1}
\frac{(-1)^{b}}{a+b} \binom{a+b-s-1}{2b}
\binom{2b}{b}\\
=& \sum_{s=r-1}^{a-1} \binom{s}{r-1} \sum_{b=0}^{a-s-1}
\frac{(-1)^{b}}{a+b} \binom{a+b-s-1}{2b} \binom{2b}{b}.
\end{split}
\end{equation}
Now we are ready to apply sum $S_m$ from \cite[Section 5.2,
Problem~8]{Knuth:94}
\begin{equation}
\begin{split}
S_m=\sum_{k=0}^n(-1)^k\frac{1}{k+m+1}\binom{n+k}{2k}\binom{2k}{k}=&(-1)^n\frac{m!n!}{(m+n+1)!}\binom{m}{n}
\qquad (m,n \ge 0).
\end{split}
\end{equation}
With $m=a-1, n=a-s-1$ and $k=b$ we find that (\ref{changesums}) from
above equals
\begin{equation}\label{equalsum}
\begin{split}
&\sum_{s=r-1}^{a-1} \binom{s}{r-1}
\frac{(-1)^{a+s+1}(a-1)!(a-s-1)!}{(2a-s-1)!}
\binom{a-1}{a-s-1}\\
=& \frac{(-1)^{a+1}(a-1)!(a-1)!}{(2a-1)!} \binom{2a-1}{r-1}
\sum_{s=r-1}^{a-1} (-1)^s
\binom{2a-r}{s-r+1}\\
 =&
\frac{(-1)^{a+r}(a-1)!(a-1)!}{(2a-1)!} \binom{2a-1}{r-1}
\sum_{s=0}^{a-r} (-1)^s
\binom{2a-r}{s}\\
\end{split}
\end{equation}
(where we applied the substitution $s-r+1 \rightarrow s $ in the
last step). Using the basic identity
\begin{equation*}
\begin{split}
\sum_{j=0}^k(-1)^j\binom{n}{j}=&(-1)^k\binom{n-1}{k}
\qquad (n,k \ge 0)\\
\end{split}
\end{equation*}
to evaluate the last sum in (\ref{equalsum}) we finally get
\begin{equation}\label{secondsum}
\begin{split}
\sum_{b=0}^{a-r} (-1)^{b} \frac{1}{a+b} \binom{a+b}{2b+r}
\binom{2b}{b}
 =&\frac{(a-1)!(a-1)!}{(2a-1)!} \binom{2a-1}{r-1}
\binom{2a-r-1}{a-r}\\
=& \frac{1}{2a-r}
\binom{a-1}{a-r}.\\
\end{split}
\end{equation}

Now we go on plugging the results (\ref{firstsum}) and
(\ref{secondsum}) from above in (\ref{twosumsdifference}) and find
\begin{equation}
\begin{split}
\frac{v_d^{(1)}}{v_d^{(0)}} =& \sum_{a=2}^{d} (-1)^{d} a
\binom{d}{a} \binom{d+a}{d} \Bigg( \frac{(-2)^{a-2}}{a} -
\sum_{r=1}^{a} (-2)^{r-2} \frac{1}{2a-r} \binom{a-1}{a-r}
\Bigg)\\
 \quad =& \sum_{a=2}^{d} (-1)^{d+1} a
\binom{d}{a} \binom{d+a}{d} \Bigg( \sum_{r=0}^{a-1} (-2)^{r-1}
\frac{1}{2a-r-1} \binom{a-1}{r} - (-2)^{a-2}\frac{1}{a} \Bigg).
\\
\end{split}
\end{equation}
In order to get rid of the inner sum we use an identity that may be proved
as an application of the classical reflection law
\begin{equation}
\frac1{(1-z)^a}F\left( {{a,b} \above 0pt c} \left| {\frac{-z}{1-z}} \right. \right)=
F\left(\left. {{a,c-b} \above 0pt c} \right| z  \right)
\end{equation}
for hypergeometric
functions by J.F. Pfaff \cite{Pfaff:1797}, namely
\begin{equation}
\begin{split}
\sum_{k=0}^m (-2)^k \frac{2m+1}{2m-k+1} \binom{m}{k} =&
\frac{(-1)^m2^{2m}}{\binom{2m}{m}}
\qquad (m \ge 0),\\
\end{split}
\end{equation}
compare \cite[identity (5.104)]{Knuth:94}. In this way we find

\begin{equation}\label{integer}
\begin{split}
\frac{v_d^{(1)}}{v_d^{(0)}} =& \sum_{a=2}^{d} (-1)^{d+a+1} a
\binom{d}{a} \binom{d+a}{d} \Bigg( 2^{2a-3} \frac{1}{2a-1}
\frac{1}{\binom{2a-2}{a-1}} -2^{a-2} \frac{1}{a} \Bigg)
\\
=&
\sum_{a=2}^{d}
(-1)^{d+a}
\binom{d}{a}
\binom{d+a}{d}
\Bigg(
2^{a-2}-
2^{2a-2}
\frac{1}{\binom{2a}{a}}
\Bigg)
\\
=& \sum_{a=2}^{d} (-1)^{d+a} 2^{a-2} \binom{d+a}{2a} \Bigg(
\binom{2a}{a} -2^a \Bigg),
\end{split}
\end{equation}
so that we have proved that the ratio ${v_d^{(1)}}/{v_d^{(0)}}$
is an integer.

In the last step of the proof we establish the explicit formula for the ratios. Recall
 the Legendre polynomials $P_d(x)$, as defined in the theorem,  and let
\begin{equation}
\begin{split}
\rho_d(x) \ce& \sum_{k=0}^d \binom{d+k}{d-k} x^k
\end{split}
\end{equation}
denote the associated Legendre
polynomials (cf. \cite[p.~66]{Riordan:68}). Then  (\ref{integer})
yields
\begin{equation}
\begin{split}
\frac{v_d^{(1)}}{v_d^{(0)}} =& (-1)^d\frac{P_d(-3)-\rho_d(-4)}{4}.
\\
\end{split}
\end{equation}
Now (cf. \cite[p.~158]{Rainville:60})
\begin{equation}
\begin{split}
P_d(-x)=(-1)^d P_d(x).
\end{split}
\end{equation}
Furthermore $\rho_d$ satisfies the recursive formula
\begin{equation}
\begin{split}
\rho_{d}(x)=&(x+2)\rho_{d-1}(x)-\rho_{d-2}(x)\\
\rho_0(x)=&0, \rho_1(x)=x+1
\end{split}
\end{equation}
(compare  \cite[p.~66]{Riordan:68}) so that we have
\begin{equation*}
\begin{split}
(-1)^d\rho_d(-4)=2d+1
\end{split}
\end{equation*}
which completes the proof of  (\ref{mainresult}).

\end{proof}

\section{Recurrence,  asymptotic behaviour and probabilities}

In this section we apply Theorem \ref{mainresult} in order to establish a recurrence for the
quotients $v_d^{(1)}/{v_d^{(0)}}$  as well as to establish the asymptotic behaviour of this
sequence for $d\to \infty$ and its consequence on the probabilities $v_d^{(1)}/{v_d}.$

Since the Legendre polynomials satisfy the recursive formula
\begin{equation}
\begin{split}
dP_{d}(x)-&(2d-1)xP_{d-1}(x)+(d-1)P_{d-2}(x)=0\\
P_0(x)=&1, P_1(x)=x
\end{split}
\end{equation}
(cf. \cite[p.~160]{Rainville:60}) we get the following second
order linear recurrence for our ratios ${v_d^{(1)}}/{v_d^{(0)}}$.

\begin{corollary}
\begin{equation}
\begin{split}
d\frac{v_d^{(1)}}{v_d^{(0)}}-3(2d-1)&\frac{v_{d-1}^{(1)}}{v_{d-1}^{(0)}}+(d-1)\frac{v_{d-2}^{(1)}}{v_{d-2}^{(0)}}=2d(d-1),
\qquad (d\ge 2)\\
 \frac{v_0^{(1)}}{v_0^{(0)}}=&\frac{v_1^{(1)}}{v_1^{(0)}}= 0.
\end{split}
\end{equation}
\end{corollary}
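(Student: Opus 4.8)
The plan is to feed the explicit evaluation from Theorem~\ref{mainresult} into the classical three-term recurrence for the Legendre polynomials and simply read off the asserted identity. Theorem~\ref{mainresult} gives $v_d^{(1)}/v_d^{(0)} = (P_d(3) - 2d - 1)/4$, or equivalently $P_d(3) = 4\,v_d^{(1)}/v_d^{(0)} + 2d + 1$. Since $dP_d(x) - (2d-1)xP_{d-1}(x) + (d-1)P_{d-2}(x) = 0$ for $d\ge 2$, specializing at $x = 3$ yields
\begin{equation*}
dP_d(3) - 3(2d-1)P_{d-1}(3) + (d-1)P_{d-2}(3) = 0.
\end{equation*}

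Next I would substitute $P_d(3) = 4\,v_d^{(1)}/v_d^{(0)} + 2d + 1$, $P_{d-1}(3) = 4\,v_{d-1}^{(1)}/v_{d-1}^{(0)} + 2d - 1$, and $P_{d-2}(3) = 4\,v_{d-2}^{(1)}/v_{d-2}^{(0)} + 2d - 3$ into this relation. The terms carrying the ratios assemble into $4$ times the left-hand side of the claimed recurrence, while the remaining purely polynomial part is $d(2d+1) - 3(2d-1)^2 + (d-1)(2d-3)$. Expanding, this equals $-8d(d-1)$, so after dividing the whole relation by $4$ one obtains precisely
\begin{equation*}
d\frac{v_d^{(1)}}{v_d^{(0)}} - 3(2d-1)\frac{v_{d-1}^{(1)}}{v_{d-1}^{(0)}} + (d-1)\frac{v_{d-2}^{(1)}}{v_{d-2}^{(0)}} = 2d(d-1) \qquad (d\ge 2).
\end{equation*}
The initial conditions then follow by direct evaluation: $P_0(3) = 1$ and $P_1(3) = 3$ give $v_0^{(1)}/v_0^{(0)} = (1-1)/4 = 0$ and $v_1^{(1)}/v_1^{(0)} = (3-3)/4 = 0$.

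There is essentially no obstacle in this argument; the only point requiring a moment's care is the arithmetic showing that $d(2d+1) - 3(2d-1)^2 + (d-1)(2d-3)$ collapses to $-8d(d-1)$, matching (with the right sign) the $2d(d-1)$ on the right after division by $4$. This is a one-line computation, so the corollary is immediate once Theorem~\ref{mainresult} and the standard Legendre recurrence are in hand.
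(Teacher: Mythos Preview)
Your proposal is correct and follows exactly the paper's approach: the paper simply notes that the Legendre recurrence $dP_d(x)-(2d-1)xP_{d-1}(x)+(d-1)P_{d-2}(x)=0$ combined with Theorem~\ref{mainresult} yields the stated corollary, without even spelling out the substitution. Your write-up is in fact more detailed than the paper's own justification, and the arithmetic check $d(2d+1)-3(2d-1)^2+(d-1)(2d-3)=-8d(d-1)$ is accurate.
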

We turn our attention now to the asymptotic behaviour of the ratios
for $d\to\infty$ and start by their generating function.
The generating function of the Legendre polynomials is given by
(\cite[p.~78]{Riordan:68})

\begin{equation}
\begin{split}
\sum_{d \ge 0} P_d(x)z^d = \frac 1{\sqrt {1-2xz+z^2}},
\end{split}
\end{equation}

so that the generating function of our ratios reads
\begin{corollary}
\begin{equation}
\begin{split}
V_1(z):=\sum_{d \ge 0}\frac{v_d^{(1)}}{v_d^{(0)}} z^d = \frac 14 \left(\frac
1{\sqrt{1-6z+z^2}}-\frac {1+z}{(1-z)^2}\right).
\end{split}
\end{equation}
\end{corollary}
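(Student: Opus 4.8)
The plan is to obtain the closed form for $V_1(z)$ by summing the explicit formula of Theorem~\ref{mainresult} against $z^d$ and inserting the generating function of the Legendre polynomials recalled immediately above. Concretely, from ${v_d^{(1)}}/{v_d^{(0)}} = (P_d(3)-2d-1)/4$ one should write
\begin{equation*}
V_1(z) = \frac14\Bigl(\sum_{d\ge 0} P_d(3)\,z^d\Bigr) - \frac14\Bigl(\sum_{d\ge 0}(2d+1)\,z^d\Bigr),
\end{equation*}
and then evaluate the two series separately.

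First, though, I would check that the formula of Theorem~\ref{mainresult}, proved there for $d\ge 2$, continues to hold for $d=0,1$, so that the split above is legitimate for the whole series: since $P_0(3)=1$ and $P_1(3)=3$, the expression $(P_d(3)-2d-1)/4$ vanishes for $d=0$ and $d=1$, in agreement with ${v_0^{(1)}}/{v_0^{(0)}}={v_1^{(1)}}/{v_1^{(0)}}=0$. Then the first series is handled by substituting $x=3$ into $\sum_{d\ge 0}P_d(x)z^d=(1-2xz+z^2)^{-1/2}$, giving $\sum_{d\ge 0}P_d(3)z^d=(1-6z+z^2)^{-1/2}$, and the second by the elementary identities $\sum_{d\ge 0}z^d=1/(1-z)$ and $\sum_{d\ge 0}dz^d=z/(1-z)^2$, which combine to
\begin{equation*}
\sum_{d\ge 0}(2d+1)z^d=\frac{2z}{(1-z)^2}+\frac{1}{1-z}=\frac{1+z}{(1-z)^2}.
\end{equation*}
Substituting these two evaluations into the displayed expression for $V_1(z)$ yields precisely the asserted formula.

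I do not expect any genuine obstacle: the argument is a direct computation. The only point worth a word of care is the legitimacy of the term-by-term manipulations, which is clear once one observes that all the series in sight have positive radius of convergence around $z=0$ (the Legendre series $\sum_d P_d(3)z^d$ converging for $|z|<3-2\sqrt{2}$, the smaller root of $1-6z+z^2$), so that the identities hold as analytic functions near the origin, equivalently in the ring of formal power series.
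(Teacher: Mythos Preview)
Your proof is correct and follows essentially the same approach as the paper: plug $x=3$ into the Legendre generating function and subtract the elementary series $\sum_{d\ge 0}(2d+1)z^d=(1+z)/(1-z)^2$. You merely spell out details (the check at $d=0,1$ and the computation of the second series) that the paper leaves implicit.
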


Performing singularity analysis the latter result allows  to
establish the asymptotic behaviour of the ratios for $d\to \infty$
as follows.
\begin{proposition}\label{asympt} For $d\to \infty$
\begin{equation}
\begin{split}
\frac{v_d^{(1)}}{v_d^{(0)}}= \frac 1{8\root 4 \of 2\sqrt{\pi d}}(3+2\sqrt 2)^{d+\frac 12}\left(1+\mathcal{O}\left(\frac 1d\right)\right)    .
\end{split}
\end{equation}
\end{proposition}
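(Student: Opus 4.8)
The plan is to apply the standard transfer theorem of singularity analysis to the generating function
\[
V_1(z) = \frac14\left(\frac{1}{\sqrt{1-6z+z^2}} - \frac{1+z}{(1-z)^2}\right)
\]
from the preceding corollary. First I would locate the dominant singularity. The polynomial $1-6z+z^2$ has roots $3\pm 2\sqrt2$, so the square-root term is singular at $z_0 = 3-2\sqrt2$, while the rational term $\frac{1+z}{(1-z)^2}$ is singular only at $z=1$. Since $3-2\sqrt2 = 1/(3+2\sqrt2) < 1$, the term $\frac{1}{\sqrt{1-6z+z^2}}$ dominates and $z_0$ is the unique singularity of smallest modulus; the rational part contributes only a polynomially-bounded correction that is exponentially negligible and will be absorbed into the error term.

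Next I would extract the local expansion of the dominant term near $z_0$. Factor $1-6z+z^2 = (1-z/z_0)(1-z/z_1)$ with $z_1 = 3+2\sqrt2$, so that near $z=z_0$,
\[
\frac{1}{\sqrt{1-6z+z^2}} = \frac{1}{\sqrt{1-z/z_1}}\,\bigl(1-z/z_0\bigr)^{-1/2},
\]
and the prefactor evaluated at $z=z_0$ is $(1-z_0/z_1)^{-1/2} = (1 - (3-2\sqrt2)^2)^{-1/2}$. Using $(3-2\sqrt2)^2 = 17-12\sqrt2$ this is $(12\sqrt2-16)^{-1/2} = (4(3\sqrt2-4))^{-1/2}$; one checks $3\sqrt2-4 = \sqrt2(3-2\sqrt2) = \sqrt2/z_1$, so the prefactor equals $z_1^{1/2}/(2\cdot 2^{1/4}) = (3+2\sqrt2)^{1/2}/(2\sqrt[4]2)$. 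The transfer theorem (e.g. Flajolet--Odlyzko / Flajolet--Sedgewick) then gives, from $[z^d](1-z/z_0)^{-1/2} \sim z_0^{-d}/\sqrt{\pi d} = z_1^{d}/\sqrt{\pi d}$, the asymptotics
\[
[z^d]\frac{1}{\sqrt{1-6z+z^2}} = \frac{(3+2\sqrt2)^{1/2}}{2\sqrt[4]2}\cdot\frac{(3+2\sqrt2)^d}{\sqrt{\pi d}}\left(1+\mathcal O\!\left(\tfrac1d\right)\right) = \frac{(3+2\sqrt2)^{d+1/2}}{2\sqrt[4]2\,\sqrt{\pi d}}\left(1+\mathcal O\!\left(\tfrac1d\right)\right).
\]
Multiplying by the overall factor $1/4$ and discarding the exponentially small contribution of the rational term yields exactly the claimed formula. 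I would need to remark that the $\mathcal O(1/d)$ error is legitimate: the next term in the singular expansion of $(1-z/z_1)^{-1/2}(1-z/z_0)^{-1/2}$ is of the form $c\,(1-z/z_0)^{1/2}$, whose coefficients are of order $z_1^d d^{-3/2}$, i.e. a factor $1/d$ smaller, and the analytic continuation to a Delta-domain around $z_0$ is immediate since the only other singularities are at $z=1$ and $z=z_1$, both of strictly larger modulus.

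The only real subtlety — hardly an obstacle — is the bookkeeping of the algebraic constant, i.e. verifying that $(1-z_0/z_1)^{-1/2}\cdot z_1^{1/2}$ simplifies to $(3+2\sqrt2)^{1/2}/(2\sqrt[4]2)$ so that the two exponential factors combine into the single power $(3+2\sqrt2)^{d+1/2}$ and the numerical prefactor becomes $1/(8\sqrt[4]2\sqrt{\pi d})$ after the factor $1/4$. This is the computation sketched above using $(3-2\sqrt2)(3+2\sqrt2)=1$; everything else is a direct invocation of the transfer theorem.
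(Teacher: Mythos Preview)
Your proof is correct and follows essentially the same route as the paper's: singularity analysis of $V_1(z)$, identification of $z_0=3-2\sqrt2$ as the dominant singularity, a local square-root expansion there, and an appeal to the Flajolet--Sedgewick transfer theorem. You actually supply more detail than the paper does---in particular the explicit simplification of the constant $(1-z_0/z_1)^{-1/2}$ and the justification of the $\mathcal{O}(1/d)$ error---whereas the paper simply writes down the local expansion and declares the asymptotics immediate; note only that in your closing paragraph the expression ``$(1-z_0/z_1)^{-1/2}\cdot z_1^{1/2}$'' should read just $(1-z_0/z_1)^{-1/2}$, as you had it correctly in the main computation.
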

\begin{proof} We adopt the usual technique of singularity analysis of generating functions,
compare e.g. \cite[Chapter IV] {FlajoletSedgewick:09} or \cite[Chapter 8]{Szpankowski:01}. The dominating singularity of the generating function $V_1(z)$ is given by the zero  $3-2\sqrt 2$ of $1-6z+z^2$ closest to the origin, whereas the other zero of $1-6z+z^2$ as well as the term $\frac {1+z}{(1-z)^2}$ will give a contribution that is exponentially smaller than the contribution of the main term. The local expansion of $V_1(z)$ about the dominating singularity  reads
\begin{equation*}
\begin{split}
V_1(z)=\frac 1{8\root 4 \of 2 \sqrt {3-2\sqrt 2}}\left( 1 - \frac z{3-2\sqrt 2}\right)^{-1/2}\left(1+\mathcal{O} \left( 1 - \frac z{3-2\sqrt 2}\right)\right) \text{for } z\to 3-2\sqrt 2, \end{split}
\end{equation*}
from which the asymptotics is immediate.
\end{proof}

In \cite{API} Akiyama and Peth\H{o} also discussed the probabilities
\begin{equation}
p_d^{(s)}\ce{v_d^{(s)}}/{v_d}
\end{equation}
for a contractive polynomial  of degree $d$ in $\R[x]$ to have $s$ pairs of complex conjugate roots. In particular they derived (cf. \cite[Theorem 6.1]{API})
\begin{equation}\label{pd0}
\begin{split}
\log p_d^{(0)} = -\frac {\log 2}2 d^2 + \frac 18 \log d + \mathcal{O} (1), \quad \text {for } d\to \infty,
\end{split}
\end{equation}
for the probability of totally real polynomials and, by numerical evidence for $d\le 100,$ conjectured that
\begin{equation}
\begin{split}
\log p_d^{(1)} \le -\frac {\log 2}2 d^2 + d \log q
\end{split}
\end{equation}
for some constant $q$. Now, obviously,
$$p_d^{(1)}= \frac{v_d^{(1)}}{v_d^{(0)}}p_d^{(0)},$$
so that from (\ref{pd0}) and our Proposition \ref{asympt} we gain
\begin{corollary}
The probability $p_d^{(1)}$ for a contractive polynomial  of degree $d$ in $\R[x]$ to have exactly one pair of complex conjugate roots fulfills
\begin{equation}
\begin{split}
\log p_d^{(1)} = -\frac {\log 2}2 d^2 + d \log (3+2\sqrt 2) + \mathcal{O}(\log d) \quad \text{for } d\to \infty.
\end{split}
\end{equation}
\end{corollary}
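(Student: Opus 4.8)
The plan is to start from the elementary factorization $p_d^{(1)} = (v_d^{(1)}/v_d^{(0)})\,p_d^{(0)}$ recorded just above the statement, and pass to logarithms, so that
$\log p_d^{(1)} = \log\!\big(v_d^{(1)}/v_d^{(0)}\big) + \log p_d^{(0)}$.
The two summands are then controlled independently: the first by Proposition~\ref{asympt}, the second by the asymptotic (\ref{pd0}) of Akiyama and Peth\H{o}. The entire argument thus amounts to taking the logarithm of the multiplicative asymptotic in Proposition~\ref{asympt} and adding the known expansion for $\log p_d^{(0)}$.

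For the first summand I would take the logarithm of the product form in Proposition~\ref{asympt}. The factor $(3+2\sqrt{2})^{d+1/2}$ contributes the linear term $d\log(3+2\sqrt{2})$ together with the bounded constant $\tfrac12\log(3+2\sqrt{2})$; the prefactor $1/(8\sqrt[4]{2}\sqrt{\pi d})$ contributes a further constant together with $-\tfrac12\log d$ coming from the $\sqrt{d}$; and $\log\!\big(1+\mathcal{O}(1/d)\big)=\mathcal{O}(1/d)$. Collecting these, $\log\!\big(v_d^{(1)}/v_d^{(0)}\big) = d\log(3+2\sqrt{2}) - \tfrac12\log d + \mathcal{O}(1)$.

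Adding the expansion (\ref{pd0}), namely $\log p_d^{(0)} = -\tfrac{\log 2}{2}\,d^2 + \tfrac18\log d + \mathcal{O}(1)$, the quadratic term $-\tfrac{\log 2}{2}d^2$ and the linear term $d\log(3+2\sqrt{2})$ emerge exactly as claimed, while the two logarithmic contributions combine to $\big(-\tfrac12+\tfrac18\big)\log d = -\tfrac38\log d$. Together with the bounded remainders this is absorbed into $\mathcal{O}(\log d)$, which yields the assertion. There is essentially no genuine obstacle here beyond bookkeeping: the statement is a direct corollary, and the only point worth verifying is that the two sources of a $\log d$ term (the $\sqrt{d}$ in Proposition~\ref{asympt} and the $\tfrac18\log d$ in (\ref{pd0})) are indeed of the stated order. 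In fact one could sharpen the error to $-\tfrac38\log d + \mathcal{O}(1)$, but the coarser form $\mathcal{O}(\log d)$ already suffices to confirm, and make precise, the bound $\log p_d^{(1)} \le -\tfrac{\log 2}{2}d^2 + d\log q$ conjectured by Akiyama and Peth\H{o}, with $q=3+2\sqrt{2}$.
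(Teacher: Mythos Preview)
Your proposal is correct and follows exactly the same approach as the paper: the corollary is obtained immediately from the factorization $p_d^{(1)} = (v_d^{(1)}/v_d^{(0)})\,p_d^{(0)}$ together with Proposition~\ref{asympt} and the asymptotic (\ref{pd0}), and you have merely spelled out the bookkeeping that the paper leaves implicit.
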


\section{Concluding remarks}
In this paper we were able to settle the instance $s=1$ of Conjecture \ref{conjecture}. The question arises, whether our methods could be used to prove the conjecture for additional instances of $s\ge 2$ or even for general $s\ge 1.$ A crucial point for a possible application of our method would be to establish a generalization of the Selberg-Aomoto integral for integrands that will occur
with the evaluation of  ${v_d^{(s)}}$ for $s\ge 2$ similar to formula $(\ref{vd})$ in the instance $s=1$. Work is in
progress on this question, but even the explicit evaluation of the integrals that appear in instance $s=2$
seems to be very hard.

\bibliographystyle{siam}

\bibliography{KirschenhoferWeitzer}
\end{document}